\theoremstyle{plain}
\newtheorem{theorem}{Theorem}[section]
\newtheorem{lemma}[theorem]{Lemma}
\newtheorem{proposition}[theorem]{Proposition}
\theoremstyle{definition}
\newtheorem{example}[theorem]{Example}
\newtheorem{remark}[theorem]{Remark}
\numberwithin{equation}{section}
\DeclareMathOperator{\Prob}{P}
\title{The square and add {M}arkov chain}
\author{Persi Diaconis}
\address{Department of Mathematics, Stanford University, Stanford, CA  94305}
\email{diaconis@math.stanford.edu}
\author{Jimmy He}
\address{Department of Mathematics, Stanford University, Stanford, CA  94305}
\email{jimmyhe@stanford.edu}
\author{I. Martin Isaacs}
\address{Department of Mathematics, University of Wisconsin, Madison, WI  53706}
\email{isaacs@math.wisc.edu}
\dedicatory{In memory of John Conway.}
\begin{document}
\maketitle
\begin{abstract}
Squaring and adding $\pm 1\pmod{p}$ generates a curiously intractable random walk. A similar process over the finite field $\mathbf{F}_q$ (with $q=2^d$) leads to novel connections between elementary Galois theory and probability.
\end{abstract}

\section{Introduction}
Let us begin with a problem we cannot solve. If $q$ is a prime power, we write $\mathbf{F}_q$ to denote the field with $q$ elements, so if $p$ is prime, $\mathbf{F}_p$ is the integers modulo $p$. A simple random walk (drunkard's walk) on $\mathbf{F}_p$ goes from $j$ to $j+1$ or $j-1$ with probability $1/2$. As time goes on, this converges to the uniform distribution on $\mathbf{F}_p$. This means that after a long time, the probability that the random walk will be at some $\alpha\in\mathbf{F}_p$ is about $1/p$. It takes about $p^2$ steps for this convergence to kick in. This is slow---if $p=101$, $p^2=10,201$. These informal statements are explained more carefully after Theorem \ref{thm: main}.

One attempt to speed things up intersperses deterministic doubling with the random $\pm 1$ steps. If $X_n$ denotes the position of the walk after $n$ steps (say starting from $X_0=0$), this new walk is
\begin{equation*}
    X_n=2X_{n-1}+\varepsilon_n \pmod{p},
\end{equation*}
with $\varepsilon_n=\pm 1$ with probability $1/2$, independently from step to step.

In \cite{CDG87}, it is shown that order $\log(p)$ steps are necessary and sufficient for convergence ($\log$ will always refer to the natural logarithm). See \cite{EV20} for amazing refinements, and \cite{CD20} for other applications to deterministic speedup.

Seeking to understand such speedups, we consider the random walk
\begin{equation*}
    X_n=X_{n-1}^2+\varepsilon_n \pmod{p}.
\end{equation*}
This is the problem we cannot solve! We do not understand the stationary distribution---numerical evidence at the end of this paper shows it is wildly non-uniform. We do not even know its support, much less rates of convergence to stationarity.

Squaring defines an automorphism of a finite field of $2$-power order, so we decided to study the corresponding problem over the field $\mathbf{F}_q$, where $q = 2^d$. To be specific, we choose a basis $\mathcal{B}$ for $\mathbf{F}_q$ over its prime subfield $\mathbf{F}_2$, so $|\mathcal{B}| = d$, and we consider the random walk on the elements of $\mathbf{F}_q$ defined by setting $X_0 = 0$ and 
\begin{equation}
\label{eq: chain defn}
    X_n = X_{n-1}^2 + \epsilon_n
\end{equation}
for $n > 0$. Here, $\epsilon_n$ is randomly chosen from the set $\{0\} \cup \mathcal{B}$, where the probability that $\epsilon_n = 0$ is $1/2$, and for each element $\alpha \in \mathcal{B}$, the probability that $\epsilon_n = \alpha$ is $\frac{1}{2d}$. The unique stationary distribution for this walk is the uniform distribution $\pi(\alpha) = 1/2^d$. (Random walks, or in more formal language, Markov chains, are discussed in greater detail in Section \ref{sec: Markov chains}, below.)

If we were to omit the squaring, and simply take
$X_n = X_{n-1} + \epsilon _n$, it is not hard to see that the behavior of the resulting walk would be independent of the choice of the basis $\mathcal{B}$ that defines it. Surprisingly, however, the walk we defined above (which includes squaring) does depend on the choice of the basis. To illustrate this, we compute the transition matrices for the square-and-add Markov chains on $\mathbf{F}_8$ defined using two different bases. As we shall see, these matrices have different eigenvalues.

First, we explain what we mean by the ``transition matrix'' for a Markov chain on a finite set $X$. This is a square matrix $M$, with rows and columns indexed by the members of $X$, where for $x,y \in X$, the entry $M(x,y)$ in row $x$ and column $y$ is the probability of arriving at $y$ in one step, starting at $x$. 

To see the relevance of the transition matrix, write $v_n$ to denote the row vector having entries indexed by the elements of $X$, where the entry at position $x$ in $v_n$ is the probability that the random walk has arrived at $x$ at time $n$. It is easy to see that $v_{n+1} = v_n M$, so $v_n = v_0 M^n$, and thus the convergence of the Markov chain is controlled by the powers of the transition matrix $M$.

To compute transition matrices for our walks on $\mathbf{F}_8$, we need to name the elements of this field, and to do this, we take advantage of the fact that in general, the multiplicative group of the finite field $\mathbf{F}_q$ is cyclic of order $q-1$. If we fix a generator $r$ for this group, (so $r$ is a ``primitive element'') we see that the elements of the field are $0$ and $r^i$ for
$0 \leq i \leq q-2$.

Naming the field elements in this way, it is trivial to see how to compute the product of two field elements, but it is not clear how to determine their sum. In fact, more information is needed before this is possible: it suffices, for example, to know the minimal polynomial $f(x)$ of $r$ over the prime subfield of
$\mathbf{F}_q$. Taking $q = 2^d$, we see that $f$ is an irreducible polynomial of degree $d$ over $\mathbf{F}_2$, so if $q = 8$, we can assume that $f(x) = x^3 + x + 1$, and with this information, the arithmetic in $\mathbf{F}_8$ is completely determined. 

Perhaps this is an appropriate point to mention John Conway's significant contribution. We have seen that to do arithmetic computations in the finite field $\mathbf{F}_q$, where $q = p^d$ for some prime $p$, we need to choose an irreducible polynomial $f(x)$ of degree $d$ over $\mathbf{F}_p$, and we require that the roots of $f$ in $\mathbf{F}_q$ are primitive elements. For each prime power $q = p^d$, Conway defined an explicit polynomial with these properties. (For example, if
$q = 8$, the corresponding Conway polynomial is
$x^3 + x + 1$.) These Conway polynomials are used in computer software---for example Magma---to do computations in finite fields.   

The transition matrix $M$ for a Markov chain on $\mathbf{F}_8$ is an $8 \times 8$ matrix whose rows and columns are indexed by the field elements, and we choose to write these elements in the order: $0$, $1$, $r$, $r^2$, $r^3$, $r^4$, $r^5$, $r^6$, and we recall that the entry $M(\alpha,\beta)$ is the probability that one step of the chain goes from $\alpha$ to $\beta$. 

If we take the basis $\mathcal{B} = \{1, r, r^2\}$, it is not hard to compute that the matrix is
\begin{equation*}
\begin{blockarray}{ccccccccc}
&0&1&r&r^2&r^3&r^4&r^5&r^6\\
\begin{block}{c(cccccccc)}
0&\frac{1}{2} & \frac{1}{6} & \frac{1}{6} & \frac{1}{6} & 0 & 0 & 0 & 0 \\
1&\frac{1}{6} & \frac{1}{2} & 0 & 0 & \frac{1}{6} & 0 & 0 & \frac{1}{6} \\
r&\frac{1}{6} & 0 & 0 & \frac{1}{2} & 0 & \frac{1}{6} & 0 & \frac{1}{6} \\
r^2&0 & 0 & \frac{1}{6} & \frac{1}{6} & 0 & \frac{1}{2} & \frac{1}{6} & 0 \\
r^3&0 & \frac{1}{6} & 0 & \frac{1}{6} & 0 & 0 & \frac{1}{6} & \frac{1}{2} \\
r^4&\frac{1}{6} & 0 & \frac{1}{2}&0 & \frac{1}{6} & \frac{1}{6} & 0 & 0 \\
r^5&0 & \frac{1}{6} & \frac{1}{6} & 0 & \frac{1}{2} & 0 & \frac{1}{6} & 0 \\
r^6&0 & 0 & 0 & 0 & \frac{1}{6} & \frac{1}{6} & \frac{1}{2} & \frac{1}{6}\\
\end{block}
\end{blockarray}
\end{equation*}
The eigenvalues of this matrix are $0$, $0$, $0$, $2/3$, $1$ and the three cube roots of $4/27$. 

If instead we take $\mathcal{B} = \{r^3,r^5,r^6\}$, the transition matrix is
\begin{equation*}
\begin{blockarray}{ccccccccc}
&0&1&r&r^2&r^3&r^4&r^5&r^6\\
\begin{block}{c(cccccccc)}
0&\frac{1}{2} & 0 & 0 & 0 & \frac{1}{6} & 0 & \frac{1}{6} & \frac{1}{6} \\
1&0 & \frac{1}{2} & \frac{1}{6} & \frac{1}{6} & 0 & \frac{1}{6} & 0 & 0 \\
r&0 & \frac{1}{6} & 0 & \frac{1}{2} & \frac{1}{6} & 0 & \frac{1}{6} & 0 \\
r^2&0 & \frac{1}{6} & 0 & 0 & \frac{1}{6} & \frac{1}{2} & 0 & \frac{1}{6} \\
r^3&\frac{1}{6} & 0 & \frac{1}{6} & 0 & 0 & \frac{1}{6} & 0 & \frac{1}{2} \\
r^4&0 & \frac{1}{6} & \frac{1}{2}&0 & 0 & 0 & \frac{1}{6} & \frac{1}{6} \\
r^5&\frac{1}{6} & 0 & 0 & \frac{1}{6} & \frac{1}{2} & \frac{1}{6} & 0 & 0 \\
r^6&\frac{1}{6} & 0 & \frac{1}{6} & \frac{1}{6} & 0 & 0 & \frac{1}{2} & 0\\
\end{block}
\end{blockarray}
\end{equation*}
and the eigenvalues of this matrix are $0$, $1$, the three cube roots of $1/27$ and the three cube roots of $8/27$.

Since these two Markov chains on $\mathbf{F}_8$ have transition matrices with different sets of eigenvalues, we see that random walks determined by different bases for $\mathbf{F}_8$ can have different long-term behaviors. We do not know, however, the extent to which the choice of a basis for $\mathbf{F}_q$, can affect the rate of convergence of the corresponding Markov chain.

The second of our two bases for $\mathbf{F}_8$, namely $\{r^3,r^5,r^6\}$, consists of an orbit under the automorphism group of $\mathbf{F}_8$, which is the group generated by the squaring map. In fact, for every prime power $q$, there always exists a basis for $\mathbf{F}_q$ that forms an orbit under the automorphism group of the field. Such a basis is said to be a \emph{normal basis}, and it happens that our basis $\{r^3,r^5,r^6\}$ is the unique normal basis for $\mathbf{F}_8$. (The set $\{r,r^2,r^4\}$ is also an orbit under the automorphism group, but it is not a basis because $r + r^2 + r^4 = 0$ since $r$ is a root of the polynomial $x^3 + x + 1$.)

Although the properties of a Markov chain on $\mathbf{F}_q$ defined by choosing a basis can depend on the chosen basis, it can be proved that the transition matrices for chains defined by normal bases are identical up to an appropriate renaming of the field elements. It follows that the corresponding random walks are essentially the same. In fact, if $q=2^d$, it is not hard to show that after a multiple of $d$ steps, the probability distribution of a square-and-add walk defined on $\mathbf{F}_q$ by using a normal basis is exactly the same as the distribution for the walk on $\mathbf{F}_q$ without squaring. Also, this is the same as simple random walk on the binary hypercube, and it is well known that this walk takes $\frac{1}{2}d(\log(d)+c)$ steps to converge \cite{D88}.

There is one situation where a sharp analysis of the square-and-add Markov chain on a field of $2$-power order is possible. Following a suggestion of Amol Aggarwal, we let $p$ be a prime such that $2$ is a primitive root modulo $p$, which means that $2$ generates the multiplicative group of $\mathbf{F}_p$. (According to the Artin conjecture, these have positive density among all primes, and this can be proved assuming the generalized Riemann hypothesis.) 

Then, for $d = p - 1$, the cyclotomic polynomial
\begin{equation}
\label{eq: irred poly}
    f(x) = x^d + x^{d-1} + \cdots + x + 1
\end{equation}
is irreducible over $\mathbf{F}_2$. (These polynomials are discussed in Section \ref{sec: cyclotomic}.) With these assumptions, the field $\mathbf{F}_2[x]/(f)$ has order $2^d$, and a basis is 
\begin{equation}
\label{eq: basis}
   \{1,x,x^2,\ldots,x^{d-1}\}. 
\end{equation}
(Note that $x$ is not a primitive element of this field because $x^d = 1$, and so $x$ does not have order $2^d$.) The following result says roughly that about ${1 \over 2}d \log(d)$ steps are necessary and sufficient for convergence of the Markov chain determined by this basis on the field $\mathbf{F}_2[x]/(f)$.

If $K$ denotes the transition matrix for a Markov chain, let $K^n(\alpha,\beta)$ denote the probability of moving from $\alpha$ to $\beta$ in $n$ steps. Let
\begin{equation*}
    \|P-Q\|_{TV}=\frac{1}{2}\sum_{\alpha\in\mathbf{F}_q}|P(\alpha)-Q(\alpha)|
\end{equation*}
denote the total variation distance of probability measures.

\begin{theorem}
\label{thm: main}
Let $p$ be a prime with $2$ a primitive root in $\mathbf{F}_p$ and let $d=p-1$. In $\mathbf{F}_q$, with $q=2^d$, the Markov chain $\eqref{eq: chain defn}$, defined by the basis \eqref{eq: basis}, satisfies for $n=\frac{1}{2}d(\log(d)+c)$ with $c>0$,
\begin{equation*}
    \|K^n(0,\cdot)-\pi\|_{TV}\leq ae^{-bc},
\end{equation*}
and satisfies for $n=\frac{1}{2}d(\log(d)-c)$ with $c>0$,
\begin{equation*}
    \|K^n(0,\cdot)-\pi\|_{TV}\geq 1-a'e^{-bc}
\end{equation*}
for universal constants $a'$, $a$ and $b$, where $\pi$ denotes the uniform measure.
\end{theorem}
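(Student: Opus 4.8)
The plan is to diagonalize the dynamics by Fourier analysis on the additive group $(\mathbf{F}_q,+)\cong\mathbf{F}_2^d$, exploiting the crucial fact that in characteristic $2$ the squaring map $S\colon\alpha\mapsto\alpha^2$ is an $\mathbf{F}_2$-linear automorphism. Thus one step of the chain applies the fixed invertible linear map $S$ and then convolves by the increment measure $\nu$ (with $\nu(0)=1/2$ and $\nu(x^i)=1/(2d)$). Writing $\chi_\psi(\alpha)=(-1)^{\mathrm{Tr}(\psi\alpha)}$ for the additive characters, indexed by $\psi\in\mathbf{F}_q$ via the nondegenerate trace pairing, the pushforward under $S$ acts on the Fourier side by the adjoint $S^\ast\psi=\psi^{1/2}$, the inverse Frobenius. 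Starting from $X_0=0$ this gives the product formula
\[
\widehat{\mu_n}(\psi)=\prod_{k=0}^{n-1}\widehat{\nu}\big((S^\ast)^k\psi\big),\qquad \widehat{\nu}(\psi)=\tfrac12+\tfrac1{2d}\sum_{i=0}^{d-1}(-1)^{\mathrm{Tr}(\psi x^i)} .
\]

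The second step is to make these factors explicit. I would attach to each $\psi$ the vector $v(\psi)=\big(\mathrm{Tr}(\psi x^i)\big)_{i\in\mathbf{Z}/p}\in\mathbf{F}_2^{\mathbf{Z}/p}$; since $\sum_i x^i=0$ this vector has even weight, and $\psi\mapsto v(\psi)$ is a linear isomorphism onto the zero-sum hyperplane $H$. In this language $\widehat{\nu}(\psi)=1-w/d$, where $w$ counts the ones of $v$ among the basis coordinates $\{0,\dots,d-1\}$, and applying $S^\ast$ replaces $v$ by $v\circ\tau$, where $\tau(i)=2i\bmod p$. Here the hypothesis that $2$ is a primitive root is decisive: $\tau$ fixes $0$ and permutes $\{1,\dots,p-1\}$ in a single $d$-cycle, so relabelling that orbit by $\mathbf{Z}/d$ turns Frobenius into the unit shift. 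A short computation then yields the clean identity $w\big((S^\ast)^k\psi\big)=W-u_{k+d/2}$, where $W$ is the total (even) weight of $v$ and $u$ is the restriction of $v$ to the orbit; hence for $n$ divisible by $d$,
\[
\widehat{\mu_n}(\psi)=\Big[(1-\tfrac{W-1}{d})^{U}\,(1-\tfrac{W}{d})^{d-U}\Big]^{n/d},
\]
with $U$ the weight of $u$, and a bounded multiplicative correction for general $n$.

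For the upper bound I would feed this into the $L^2$ bound $4\|\mu_n-\pi\|_{TV}^2\le\sum_{\psi\neq0}|\widehat{\mu_n}(\psi)|^2$, grouping the $\binom{p}{W}$ frequencies of weight $W$. Because every nonzero frequency has $W\ge2$, the analogue of the coupon-collector sum here begins at $W=2$: at $n=\tfrac12 d(\log d+c)$ each weight-$W$ block contributes essentially $e^{-cW}/W!$ (the $v_0=1$ part being smaller by a factor $1/d$), so the total is about $\cosh(e^{-c})-1$, giving the claimed $ae^{-bc}$ once the large-$W$ tail is controlled by the crude estimates $1-W/d\le e^{-W/d}$ and $\binom pW\le(ep/W)^W$.

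The lower bound I would obtain by the second-moment (Wilson) method applied to the slowest modes, the weight-$2$ characters supported off coordinate $0$: set $Z=\sum_{\psi\in A}\chi_\psi$ with $A$ this family of size $\binom d2$. Then $\E_\pi Z=0$ while $\E_{\mu_n}Z\approx\binom d2(1-2/d)^n\sim\tfrac12 d\,e^{c}$ at $n=\tfrac12 d(\log d-c)$, and $\Var_\pi Z=|A|$ is negligible against $(\E_{\mu_n}Z)^2$. The delicate point, and the main obstacle, is
\[
\Var_{\mu_n}Z=\sum_{\psi,\psi'\in A}\big(\widehat{\mu_n}(\psi+\psi')-\widehat{\mu_n}(\psi)\widehat{\mu_n}(\psi')\big):
\]
the dominant disjoint-support pairs produce weight-$4$ frequencies for which the decay is \emph{not} multiplicative, so the naive leading terms cancel only up to a correction of order $e^{2c}(\log d)^2$, while the pairs meeting in one coordinate contribute order $d^2e^{c}$; both are $o\big((\E_{\mu_n}Z)^2\big)$, and Chebyshev's inequality then yields $\|\mu_n-\pi\|_{TV}\ge1-a'e^{-bc}$. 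Carrying out this variance estimate carefully, together with the passage from $n$ a multiple of $d$ to general $n$, is where essentially all the work lies.
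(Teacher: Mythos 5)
Your proposal is correct and takes essentially the same route as the paper: the Fourier product formula for the affine walk, explicit eigenvalues extracted from the Frobenius orbit structure (your dual-side identity $w\bigl((S^\ast)^k\psi\bigr)=W-u_{k+d/2}$ is exactly the content of Proposition \ref{prop: fourier transform}, with your two cases $v_0=0,1$ matching the paper's four cases after regrouping), an $L^2$ upper bound by summing over frequencies grouped by weight (this is the paper's Appendix \ref{app: alt pf} argument; the paper's main text instead invokes Proposition \ref{prop: perm mixes no slower} and known hypercube bounds), and a second-moment lower bound driven by the weight-$2$ characters (which the paper only sketches, citing Levin--Peres). One quantitative quibble that does not affect the conclusion: the disjoint-support correction to $\Var_{\mu_n}Z$ is of order $d\log(d)\,e^{2c}$ in magnitude (and is negative, so it can simply be dropped), rather than $(\log d)^2e^{2c}$, and either way it remains $o\bigl((\E_{\mu_n}Z)^2\bigr)$ as required.
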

Informally, the precise upper and lower bounds in Theorem \ref{thm: main} can be phrased as ``about $\frac{1}{2}d\log(d)$ steps are necessary and sufficient for convergence''.

The heart of the proof is some magical combinatorics for the Frobenius map of repeated squaring. It is the kind of magic John Conway enjoyed.

\begin{remark}
Theorem \ref{thm: main} holds in more generality. As long as $d$ is even, $f$ defined by \ref{eq: irred poly} has no repeated factors, and so the random walk can be defined on the quotient $\mathbf{F}_2[x]/(f)$ which will be a direct sum of fields (with componentwise addition and multiplication). Squaring is still an isomorphism in this case. This is proved in Lemma \ref{lem: cyclotomic irr}. The same bounds hold in this case. Theorem \ref{thm: main} can also be extended (although with weaker estimates) to general primes $p$, with the random walk being $X_{n+1}=X_n^p+\varepsilon_{n+1}$.
\end{remark}

The combinatorics of combining adding and multiplying in finite fields is currently a hot topic in additive combinatorics, see \cite{G05}. The problems studied here seem different.

\subsection{Outline}
Section \ref{sec: background} contains background material on Markov chains, finite fields, and Fourier analysis over $\left(\mathbf{F}_2\right)^d$. Theorem \ref{thm: main} is proved in Section \ref{sec: proof}. Section \ref{sec: over F_p} returns to the square-and-add walk over $\mathbf{F}_p$ and has some computed examples and open questions.

\section{Background}
\label{sec: background}
This section contains some needed background on Markov chains, finite fields, and on Fourier analysis over $\left(\mathbf{F}_2\right)^d$. It presents these topics in a form needed to prove Theorem \ref{thm: main}.

\subsection{Markov chains}
\label{sec: Markov chains}
A \emph{Markov chain} is a sequence of random variables $X_n$ taking values in some finite set $X$, so that $X_{n+1}$ depends on $X_1,\dotsc, X_n$ solely through $X_n$. We will assume that our Markov chains are \emph{homogeneous}, which means that the chance of moving from one state to another at step $n$ doesn't depend on $n$. Such a process can be represented using a matrix $P$ indexed by $X$, whose entries $P(x,y)$ encode the chance of moving from $x$ to $y$. Here, by convention probability distributions are written as row vectors, and $P$ acts on the right, so if $\mu_n(x)$ is the chance of being at $x$ after $n$ steps of the Markov chain, then $\mu_n=\mu_{n-1}P$.

A \emph{stationary distribution} for the Markov chain defined by $P$ is some probability measure $\pi$ on $X$ so that $\pi P=\pi$. A Markov chain is said to be \emph{irreducible} if for any two states $x,y\in X$, there is some positive integer $t$ such that $P^t(x,y)>0$. This means that it is possible to reach any state from any other in the chain. A Markov chain is said to be \emph{aperiodic} if $P^t(x,x)>0$ for all sufficiently large $t$. Note that a sufficient condition for $P$ to be aperiodic is for $P^s(x,x)>0$ and $P^t(x,x)>0$ for some $s,t\geq 1$ with $(s,t)=1$. By the Perron--Frobenius theorem, an aperiodic, irreducible Markov chain has a unique stationary distribution.

\subsection{Finite fields}
This classical subject is exhaustively developed in \cite{LN97}. Throughout, we take $q=p^d$, where $p$ is prime, and we write $\mathbf{F}_q$ to denote the unique field with $q$ elements. If $f$ is an arbitrary irreducible degree $d$ polynomial with coefficients in $\mathbf{F}_p$, then
\begin{equation*}
    \mathbf{F}_q\cong \mathbf{F}_p[x]/(f),
\end{equation*}
and if we represent $\mathbf{F}_q$ in this way, we see that the set $\{1,x,x^2,\dotsc, x^{d-1}\}$ is a basis for $\mathbf{F}_q$ over its prime subfield $\mathbf{F}_p$.

Even if $f(x)$ is \emph{not} irreducible, $\mathbf{F}_p[x]/(f)$ is still an algebra over $\mathbf{F}_p$ and $1,x,\dotsc, x^{d-1}$ is still a basis. This algebra is readily identified provided that $f$ has no repeated factors.

\begin{lemma}
Let $f(x)\in\mathbf{F}_p[x]$ have no repeated factors. Suppose that $f=\prod f_i$ where the degree of $f_i$ is $d_i$. Then
\begin{enumerate}
    \item $\mathbf{F}_p[x]/(f)$ is isomorphic to the direct sum of the fields $\mathbf{F}_p[x]/(f_i)\cong \mathbf{F}_{p^{d_i}}$.
    \item The map $y\mapsto y^p$ is an automorphism on $\mathbf{F}_p[x]/(f)$.
\end{enumerate}
\end{lemma}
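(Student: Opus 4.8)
The plan is to handle the two parts separately: part (1) is an instance of the Chinese Remainder Theorem, and part (2) reduces to the fact that the Frobenius map is injective on any reduced ring. Neither part is deep, but the hypothesis that $f$ has no repeated factors is essential and enters at a precise point, which I will flag.

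For part (1), I would first observe that since $f$ has no repeated factors, its factorization into irreducibles consists of distinct polynomials $f_i$ (after absorbing the leading coefficient, which does not change the ideal $(f)$). Distinct irreducibles in the principal ideal domain $\mathbf{F}_p[x]$ generate comaximal ideals, so the $f_i$ are pairwise coprime. The Chinese Remainder Theorem then yields a ring isomorphism
\[
\mathbf{F}_p[x]/(f) \;\cong\; \prod_i \mathbf{F}_p[x]/(f_i),
\]
given by reduction modulo each $f_i$. Since each $f_i$ is irreducible of degree $d_i$, the quotient $\mathbf{F}_p[x]/(f_i)$ is a field of order $p^{d_i}$, hence isomorphic to $\mathbf{F}_{p^{d_i}}$ by the uniqueness of finite fields. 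This establishes (1), the ``direct sum'' being the product ring with componentwise operations.

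For part (2), I would first note that the Frobenius map $\phi\colon y \mapsto y^p$ is a ring endomorphism of any commutative $\mathbf{F}_p$-algebra: multiplicativity is immediate, and additivity is the identity $(a+b)^p = a^p + b^p$, valid because $p \mid \binom{p}{k}$ for $0 < k < p$. It then remains to show that $\phi$ is bijective on $R := \mathbf{F}_p[x]/(f)$, and since $R$ is finite it suffices to prove injectivity. Here I would transport the question through the isomorphism of part (1): because $\phi$ is defined purely in terms of the ring operations, any ring isomorphism intertwines it with the corresponding Frobenius on the target, so $\phi$ corresponds to the componentwise $p$-th power map on $\prod_i \mathbf{F}_{p^{d_i}}$. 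On each finite field $\mathbf{F}_{p^{d_i}}$ the Frobenius is injective, since a field has no zero divisors and hence $y^p = 0$ forces $y = 0$; an injective self-map of a finite set is bijective. Thus $\phi$ is bijective on each factor, hence on the product, hence on $R$.

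The step that genuinely uses the hypothesis is the injectivity in part (2): ``no repeated factors'' is exactly what makes $R$ reduced, and a repeated factor would produce a nonzero nilpotent killed by $\phi$. For this reason I do not expect a serious obstacle anywhere; the only points needing care are the comaximality of the ideals $(f_i)$ that drives the Chinese Remainder Theorem, and the observation that bijectivity of $\phi$ is a ring-theoretic property preserved by the isomorphism of part (1). If one wished to avoid invoking part (1), injectivity can be checked directly: a nilpotent representative $g$ satisfies $f \mid g^k$ for some $k$, whence each irreducible $f_i \mid g$, and as the $f_i$ are distinct their product $f$ divides $g$, so $g \equiv 0$ in $R$.
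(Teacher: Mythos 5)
Your proof is correct and follows essentially the same route as the paper: part (1) via the Chinese Remainder Theorem, and part (2) deduced from part (1) because the Frobenius $y\mapsto y^p$ is an automorphism on each field factor. You simply supply the details (comaximality of the ideals $(f_i)$, injectivity of Frobenius on a field, finiteness giving bijectivity) that the paper leaves implicit.
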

\begin{proof}
The first claim is a restatement of the Chinese remainder theorem, and the second claim follows from the first, since the map $y\mapsto y^p$ is an automorphism for each factor.
\end{proof}

The random walk \eqref{eq: chain defn} can be defined on the algebra $\mathbf{F}_p[x]/(f)$, using the basis $\mathcal{B}=\{1,x,x^2,\dotsc, x^{d-1}\}$, even if the polynomial $f$ is not irreducible, and provided that $f$ has no repeated factors, this walk has a uniform stationary distribution. In the following lemma, we take $p=2$.
\begin{lemma}
Let $f\in\mathbf{F}_2[x]$, where $f$ has no repeated factors. Then the Markov chain on $\mathbf{F}_2[x]/(f)$ defined as in \eqref{eq: chain defn} with respect to the basis $\mathcal{B}=\{1,x,x^2,\dotsc, x^{d-1}\}$ is irreducible, aperiodic, and has a unique stationary distribution, which is uniform.
\end{lemma}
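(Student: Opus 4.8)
The plan is to derive all three assertions from a single explicit description of the reachable states. Write $\sigma$ for the squaring map $y\mapsto y^2$ on $R:=\mathbf{F}_2[x]/(f)$, which is an additive bijection by the preceding lemma, and let $m$ be its order as a permutation of the finite set $R$, so $\sigma^m=\mathrm{id}$. Since $\{1,x,\ldots,x^{d-1}\}$ is a basis, $\dim_{\mathbf{F}_2}R=d$ and $|R|=2^d$, and every individual choice of $\epsilon_j$ has probability at least $1/(2d)>0$, so that a state is reachable in $n$ steps with positive probability precisely when it arises from \emph{some} admissible choice of $\epsilon_1,\ldots,\epsilon_n$.

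I would first dispose of the stationary distribution. To see that the uniform measure $\pi$ is stationary it suffices to show the transition matrix $K$ is doubly stochastic; being already row stochastic, this gives $\pi K=\pi$. For a fixed target $\beta$ the column sum is $\sum_{\alpha\in R}\Prob(\epsilon=\beta+\alpha^2)$, using $-1=1$ in characteristic $2$. As $\alpha$ ranges over $R$, bijectivity of $\sigma$ makes $\beta+\alpha^2$ range over all of $R$ exactly once, so this sum equals $\sum_{z\in R}\Prob(\epsilon=z)=1$, as required.

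The heart of the argument is reachability. Iterating \eqref{eq: chain defn} from $X_0=\alpha$ and using additivity of $\sigma$ gives the closed form
\[
X_n=\sigma^n(\alpha)+\sum_{j=1}^{n}\sigma^{\,n-j}(\epsilon_j).
\]
Hence the set reached from $\alpha$ in exactly $n$ steps is $\sigma^n(\alpha)+W_n$, where, after reindexing $i=n-j$, $W_n=\{\sum_{i=0}^{n-1}\sigma^i(\beta_i):\beta_i\in\{0\}\cup\mathcal{B}\}$ is the set reached from $0$. The key observation is that for each residue $r$ modulo $m$, every index $i\equiv r$ contributes an \emph{independent} choice of an element of $\{0\}\cup\sigma^r(\mathcal{B})$, and over $\mathbf{F}_2$ the sum of $d$ such choices realizes every subset sum of $\sigma^r(\mathcal{B})$, hence the whole $\mathbf{F}_2$-span of $\sigma^r(\mathcal{B})$. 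Since $\sigma^r$ is an invertible linear map and $\mathcal{B}$ spans $R$, that span is all of $R$. Therefore once $n\ge md$, so that each residue class modulo $m$ is represented at least $d$ times among $\{0,\ldots,n-1\}$, we get $W_n=R$, and the reachable set from every $\alpha$ equals $\sigma^n(\alpha)+R=R$.

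This one computation yields the remaining properties. Irreducibility is immediate, since for any $\alpha,\beta$ and any $n\ge md$ there is a positive-probability path from $\alpha$ to $\beta$. Aperiodicity follows because $K^n(\alpha,\alpha)>0$ for all $n\ge md$, which is exactly the stated criterion. Then Perron--Frobenius gives a unique stationary distribution, and since the uniform measure is stationary, it is the unique one. The obstacle worth care is the reachability step: recognizing that additivity of squaring converts the orbit into an $\mathbf{F}_2$-linear subset-sum problem stratified by the exponent's residue modulo the Frobenius order $m$, and checking that enough repetitions of each residue class (guaranteed by $n\ge md$) restore the full span. I would be careful that it is precisely the availability of the move $\epsilon_j=0$ that lets us realize an \emph{arbitrary} subset sum rather than a sum of a prescribed number of terms.
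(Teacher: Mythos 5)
Your proof is correct, but it takes a genuinely different route from the paper's. The paper factors the transition matrix as $K=PT$, where $P$ is the permutation matrix of squaring and $T$ is the lazy add-only walk; it then mimics each $T$-step by a block of $n$ steps of $K$ (where $P^n=I$), using laziness to ``wait out'' the permutation, so that irreducibility of $K$ is inherited from irreducibility of $T$, and aperiodicity comes from a coprimality argument ($\gcd(kn+1,n)=1$). Notably, that argument never uses additivity of the squaring map --- only that it is a bijection of finite order --- so it would apply verbatim to any deterministic bijection of finite order interleaved with a lazy irreducible walk. Your argument instead exploits the $\mathbf{F}_2$-linearity of Frobenius to get the closed form $X_n=\sigma^n(\alpha)+\sum_{j=1}^n\sigma^{n-j}(\epsilon_j)$, reduces reachability to a subset-sum/spanning computation stratified by the exponent's residue modulo the order $m$ of $\sigma$, and concludes that \emph{every} entry of $K^n$ is strictly positive for all $n\geq md$. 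That is a stronger, more quantitative conclusion than the paper extracts (it gives a Doeblin-type positivity of a fixed power of $K$, and irreducibility and aperiodicity fall out simultaneously, with aperiodicity matching the paper's stated criterion verbatim); the price is that it genuinely needs characteristic $2$ linearity, whereas the paper's softer argument does not. Your stationarity argument (double stochasticity via bijectivity of $\alpha\mapsto\beta+\alpha^2$) is essentially the same fact as the paper's observation that $P$ and $T$ each preserve the uniform distribution, just verified by column sums. Both proofs rest crucially on the availability of the lazy move $\epsilon=0$ and on $\mathcal{B}$ being a basis, and both correctly invoke the preceding lemma (no repeated factors) to get that squaring is a bijection of finite order.
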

\begin{proof}
Factor the transition matrix for the random walk as $K=PT$, where $T$ is the transition matrix for the walk defined by $X_n=X_{n-1}+\varepsilon_n$ and $P$ is the permutation matrix encoding the bijection $y\mapsto y^2$ on $\mathbf{F}_2[x]/(f)$.

Since $P$ is a permutation matrix, it has some finite order, so $P^n=I$ for some $n>0$. First, we show that $K^n(\alpha,\beta)>0$ if $T(\alpha,\beta)>0$. It will be useful to view a step from $K$ as applying $P$ followed by a step from $T$. Since $T$ is lazy, we can always apply $P$, and then remain stationary for the step from $T$, so do this $n-1$ times. At the very last step, instead of remaining stationary take a step from $T$. The result is moving according to $P$ exactly $n$ times, returning to the initial state, and then a step from $T$, and so $K^n(\alpha,\beta)>0$ if $T(\alpha, \beta)>0$. 

To see that $K$ is irreducible, observe first that $T$ is irreducible, so there exists a path using steps from $T$ that goes from $\alpha$ to $\beta$. Since each step of $T$ can be mimicked by a block of $n$ steps from $K$, it follows that there is a path from $\alpha$ to $\beta$ using steps from $K$.

To see that the Markov chain is aperiodic, start by taking a single step from $K$, say going from $\alpha$ to $\beta$, and then take steps in blocks of size $n$, going from $\beta$ back to $\alpha$, which is possible since $T$ is irreducible and $K^n(\alpha,\beta)>0$ if $T(\alpha, \beta)>0$. This means $K^{kn+1}(\alpha,\alpha)>0$ for some $k$. Also, since $T$ is lazy, $K^n(\alpha,\alpha)>0$. Because $kn+1$ and $n$ are coprime, the Markov chain is aperiodic.

Finally, since $T$ and $P$ both preserve the uniform distribution, so does $K$. Irreducibility and aperiodicity imply uniqueness of the stationary distribution.
\end{proof}

\subsubsection{Cyclotomic polynomials}
\label{sec: cyclotomic}
Fix $n\in\mathbf{N}$ and let the \emph{cyclotomic polynomials} $\Phi_n(x)\in\mathbf{Z}[x]$ be defined by
\begin{equation*}
    \Phi_n(x)=\prod_{\substack{1\leq k\leq n\\ \gcd(k,n)=1}}(x-e^{2\pi i k/n}).
\end{equation*}
The following facts are well-known (see \cite[Chapter 2 \S 4]{LN97} for example):
\begin{itemize}
    \item $\Phi_n(x)$ has degree $\phi(n)$ ($\phi$ denotes the Euler totient function).
    \item The coefficients of $\Phi_n(x)$ lie in $\mathbf{Z}$.
    \item $\Phi_n(x)$ is irreducible over $\mathbf{Q}$.
    \item If $p$ is prime, then $\Phi_p(x)=1+x+\dotsc +x^{p-1}$.
    \item $\Phi_{p^n}(x)=\Phi_p(x^{p^{n-1}})$.
\end{itemize}

A \emph{primitive element} or \emph{primitive root} of $\mathbf{Z}/n\mathbf{Z}$ is an element that generates the group of units $(\mathbf{Z}/n\mathbf{Z})^\times$. A \emph{primitive polynomial} over $\mathbf{F}_p$ is the minimal polynomial of some primitive element $\alpha\in\mathbf{F}_q$. The following result (see \cite[Theorem 2.47]{LN97} for example) is useful.
\begin{lemma}
\label{lem: cyclotomic irr}
Let $n$ be a positive integer relatively prime to a prime power $q$, and let $d$ be the order of $q$ modulo $n$. Since the cyclotomic polynomial $\Phi_n$ has coefficients in $\mathbf{Z}$, it can be viewed as a polynomial in $\mathbf{F}_q[x]$, and as such, it has $\phi(n)/d$ distinct irreducible factors, each of which has degree $d$.
\end{lemma}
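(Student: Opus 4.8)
The plan is to analyze the factorization of $\Phi_n$ over $\mathbf{F}_q$ by passing to an algebraic closure $\overline{\mathbf{F}_q}$ and understanding which field the roots generate. First I would establish separability: since $\gcd(n,q)=1$, the characteristic $p$ of $\mathbf{F}_q$ does not divide $n$, so $n$ is a unit in $\mathbf{F}_q$ and the formal derivative of $x^n-1$ is $nx^{n-1}$, whose only root is $0$. Thus $x^n-1$ shares no root with its derivative, so it is separable over $\mathbf{F}_q$, and since the integer identity $x^n-1=\prod_{e\mid n}\Phi_e(x)$ reduces mod $p$ to an identity in $\mathbf{F}_q[x]$, the factor $\Phi_n$ is itself separable. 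In particular $\Phi_n$ has no repeated irreducible factors over $\mathbf{F}_q$ and has exactly $\deg\Phi_n=\phi(n)$ distinct roots in $\overline{\mathbf{F}_q}$. The same separable factorization identity, applied inductively, shows that the roots of $\Phi_n$ are precisely the elements of $\overline{\mathbf{F}_q}$ of multiplicative order exactly $n$, i.e. the primitive $n$-th roots of unity.

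Next I would compute the degree over $\mathbf{F}_q$ of the irreducible factor satisfied by a fixed primitive $n$-th root of unity $\zeta$, which is $[\mathbf{F}_q(\zeta):\mathbf{F}_q]$. Since $\mathbf{F}_q(\zeta)$ is a finite extension, it equals $\mathbf{F}_{q^m}$ for some $m\geq 1$, with $[\mathbf{F}_q(\zeta):\mathbf{F}_q]=m$. The key observation is that $\zeta\in\mathbf{F}_{q^m}$ if and only if $\zeta$ lies in the cyclic group $\mathbf{F}_{q^m}^\times$ of order $q^m-1$, which—because $\zeta$ has order exactly $n$—holds if and only if $n\mid q^m-1$, that is, $q^m\equiv 1\pmod n$. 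The least such $m$ is by definition the order $d$ of $q$ modulo $n$, so $[\mathbf{F}_q(\zeta):\mathbf{F}_q]=d$. Crucially, this computation does not depend on which primitive $n$-th root $\zeta$ was chosen, so \emph{every} irreducible factor of $\Phi_n$ over $\mathbf{F}_q$ has degree exactly $d$.

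Finally I would count. As $\Phi_n$ is separable of degree $\phi(n)$ and each irreducible factor has degree $d$ and hence contributes $d$ distinct roots, the number of irreducible factors is $\phi(n)/d$ (which forces $d\mid\phi(n)$, as is also clear from Lagrange's theorem applied to $q\bmod n$ in $(\mathbf{Z}/n\mathbf{Z})^\times$). The step I view as the heart of the argument, and the only one requiring genuine care, is the identification $[\mathbf{F}_q(\zeta):\mathbf{F}_q]=d$; everything else is separability bookkeeping and a degree count. An equivalent and perhaps more transparent phrasing of that step is that the Frobenius map $\sigma\colon y\mapsto y^q$ permutes the primitive $n$-th roots of unity, the irreducible factors of $\Phi_n$ correspond exactly to the $\sigma$-orbits, and the orbit of $\zeta$ is $\{\zeta,\zeta^q,\zeta^{q^2},\dots\}$, whose size is the least $m$ with $\zeta^{q^m}=\zeta$, namely the least $m$ with $q^m\equiv 1\pmod n$, which is $d$.
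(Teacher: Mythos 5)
Your proof is correct. The paper does not prove this lemma itself---it simply cites \cite[Theorem 2.47]{LN97}---and your argument (separability of $x^n-1$ from $\gcd(n,q)=1$, identification of the roots of $\Phi_n$ with the primitive $n$-th roots of unity, and the computation $[\mathbf{F}_q(\zeta):\mathbf{F}_q]=d$ via the least $m$ with $q^m\equiv 1\pmod{n}$, equivalently the size of the Frobenius orbit of $\zeta$) is exactly the standard proof given in that reference, so you have correctly filled in the delegated details.
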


From now on, we work over $\mathbf{F}_2$, and we observe that if $n$ is an odd integer and $2$ is a primitive root modulo $n$, then Lemma \ref{lem: cyclotomic irr} guarantees that the cyclotomic polynomial $\Phi_n$ is irreducible. For example, $1+x+x^2+x^3+x^4=\Phi_5(x)$ and $1+x^3+x^6=\Phi_9(x)$ are both irreducible over $\mathbf{F}_2$. 

\subsubsection{Trinomials}
A huge collection of explicit trinomials $x^n+x^m+1$ which are primitive irreducible over $\mathbf{F}_2$ is available, see \cite{BZ11} and \cite[Chapter 3 \S 5]{LN97}. Consider $x^n+x+1$. Computationally, they are often irreducible (but certainly not always). They have the following useful property however.

\begin{lemma}
For all $n\geq 2$, the polynomial $x^n+x+1$ has no repeated factors over $\mathbf{F}_2$.
\end{lemma}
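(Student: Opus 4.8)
The plan is to show $x^n+x+1$ has no repeated factors by verifying it is coprime to its formal derivative, which is the standard criterion: a polynomial over a field has no repeated factors if and only if $\gcd(f, f') = 1$. Over $\mathbf{F}_2$, the formal derivative of $f(x) = x^n + x + 1$ is $f'(x) = nx^{n-1} + 1$, since the derivative of the constant term vanishes and the derivative of $x$ is $1$. Here I must be careful about the coefficient $n$, which is taken modulo $2$, so the two cases $n$ even and $n$ odd will behave differently and should be handled separately.

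First I would dispose of the case $n$ even. Then $n \equiv 0 \pmod 2$, so $f'(x) = 1$ in $\mathbf{F}_2[x]$, and trivially $\gcd(f, f') = \gcd(f, 1) = 1$, giving no repeated factors immediately. The substantive case is $n$ odd, where $f'(x) = x^{n-1} + 1$. Here I would compute $\gcd(f, f')$ directly using the Euclidean algorithm. The key computation is to reduce $f$ modulo $f'$: since $x^{n-1} \equiv 1 \pmod{f'}$, we get $x^n = x \cdot x^{n-1} \equiv x \pmod{f'}$, and therefore $f(x) = x^n + x + 1 \equiv x + x + 1 = 1 \pmod{f'}$ (using $x + x = 0$ in characteristic $2$). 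Thus $\gcd(f, f')$ divides $1$, so $\gcd(f, f') = 1$, and again $f$ has no repeated factors.

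The step I expect to require the most care is not the algebra itself but handling the coefficient $n$ correctly when passing to $\mathbf{F}_2$: the derivative $nx^{n-1}$ must be interpreted with $n$ reduced mod $2$, which is precisely what splits the argument into the two parity cases. Once that is set up correctly, both cases collapse to a one-line reduction. I would write the proof by first recalling the $\gcd(f,f')=1$ criterion, then computing $f'$ and splitting on the parity of $n$, with the odd case using the congruence $x^{n-1}\equiv 1$ to reduce $f$ to the constant $1$. No deeper machinery (such as Lemma \ref{lem: cyclotomic irr} or properties of cyclotomic polynomials) is needed; this is an elementary self-contained derivative computation, which is why I anticipate no genuine obstacle beyond the bookkeeping of characteristic $2$.
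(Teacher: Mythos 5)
Your proof is correct and matches the paper's argument in all essentials: the same derivative criterion, the same parity split (with $f'=1$ when $n$ is even), and the same key identity $x^{n-1}\equiv 1 \Rightarrow x^n\equiv x \Rightarrow f\equiv 1$ in the odd case. The only cosmetic difference is that you phrase the odd case as a Euclidean reduction modulo $f'$, while the paper evaluates $f$ at a root of $f'$ in a splitting field; these are the same computation.
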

\begin{proof}
A polynomial has repeated factors if and only if it shares a common factor with its formal derivative. If $n$ is even and $f(x)=x^n+x+1$, then $f'(x)=1$ and so $f'$ has no common factor with $f$. If $n$ is odd, $f'(x)=x^{n-1}+1$. Then if $r$ denotes a root of $f'(x)$ (in some splitting field), we have $r^{n-1}=1$, so $r^n=r$, and thus $f(r)=r^n+r+1=1$. It follows that $r$ is not a root of $f$, so $f$ and $f'$ cannot share any common factors.
\end{proof}

\subsection{Fourier analysis over \texorpdfstring{$\left(\mathbf{F}_2\right)^d$}{F\_2\^{}d}}
Let $\left(\mathbf{F}_2\right)^d$ be the abelian group of length $d$ binary vectors under coordinate-wise addition. The characters of $\left(\mathbf{F}_2\right)^d$ are indexed by $\beta\in\left(\mathbf{F}_2\right)^d$:
\begin{equation*}
    \chi_\beta(\alpha)=(-1)^{\alpha\cdot \beta},
\end{equation*}
where $\alpha\cdot \beta$ denotes the number of coordinates $i$ for which $\alpha_i=\beta_i=1$ (alternatively, it can be thought of as a dot product over $\mathbf{F}_2$).

If $Q(\alpha)$ is a probability distribution on $\left(\mathbf{F}_2\right)^d$ (or more generally any function $\left(\mathbf{F}_2\right)^d\to\mathbf{C}$), its \emph{Fourier transform} at $\beta\in\left(\mathbf{F}_2\right)^d$ is
\begin{equation*}
    \widehat{Q}(\beta)=\sum_{\alpha\in\left(\mathbf{F}_2\right)^d}Q(\alpha)(-1)^{\alpha\cdot \beta}.
\end{equation*}
It's easy to see that $\widehat{Q}(0)=1$. The uniform distribution $U(\alpha)=1/2^d$ for all $\alpha\in\left(\mathbf{F}_2\right)^d$ has the Fourier transform
\begin{align*}
    \widehat{U}(0)&=1
    \\\widehat{U}(\alpha)&=0,\qquad \alpha\neq 0.
\end{align*}
The \emph{convolution} of two probabilities $Q_1$, $Q_2$ is
\begin{equation*}
    (Q_1\ast Q_2)(\alpha)=\sum _{\gamma}Q_1(\gamma)Q_2(\alpha+\gamma).
\end{equation*}
Note that if $X_1$ and $X_2$ are independent random variables in $\left(\mathbf{F}_2\right)^d$ with distributions $Q_1$ and $Q_2$ respectively, then $X_1+X_2$ has $Q_1\ast Q_2$ as its distribution. The Fourier transform turns convolution into product, with
\begin{equation*}
    \widehat{Q_1\ast Q_2}(\beta)=\widehat{Q}_1(\beta)\widehat{Q}_2(\beta).
\end{equation*}
The measure $Q$ can be recovered from its Fourier transform via the inversion formula
\begin{equation*}
    Q(\alpha)=\frac{1}{2^d}\sum_{\beta}(-1)^{\alpha\cdot \beta}\widehat{Q}(\beta).
\end{equation*}
Finally, the Plancherel theorem relates the $L^2$ norm of $Q$ with $\widehat{Q}$, and states
\begin{equation*}
    2^d\sum_{\alpha\in\left(\mathbf{F}_2\right)^d}|Q(\alpha)|^2=\sum_{\beta\in(\mathbf{F}_2)^d}|\widehat{Q}(\beta)|^2
\end{equation*}

The following upper bound lemma is the key to establishing the upper bound in Theorem \ref{thm: main}. It is a direct consequence of the Plancherel theorem.
\begin{lemma}
\label{lem: upper bound}
Let $Q(\alpha)$ be a probability on $\left(\mathbf{F}_2\right)^d$ and let $U(\alpha)$ be the uniform distribution. Then
\begin{equation*}
    4\|Q-U\|_{TV}^2\leq 2^d\sum_{\alpha}(Q(\alpha)-U(\alpha))^2=\sum_{\beta\neq 0}|\widehat{Q}(\beta)|^2.
\end{equation*}
\end{lemma}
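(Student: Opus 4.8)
The plan is to prove the two relations separately, reading the chain of inequalities from left to right. The rightmost equality is the Plancherel theorem applied to the function $Q - U$; the leftmost inequality is a Cauchy--Schwarz estimate relating the $L^1$ norm (which defines total variation) to the $L^2$ norm.

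First I would establish the equality $2^d\sum_\alpha (Q(\alpha)-U(\alpha))^2 = \sum_{\beta\neq 0}|\widehat{Q}(\beta)|^2$. The Plancherel theorem as stated gives $2^d\sum_\alpha |R(\alpha)|^2 = \sum_\beta |\widehat{R}(\beta)|^2$ for any function $R$; applying it with $R = Q - U$ yields $2^d\sum_\alpha (Q(\alpha)-U(\alpha))^2 = \sum_\beta |\widehat{Q-U}(\beta)|^2$. Since the Fourier transform is linear, $\widehat{Q-U}(\beta) = \widehat{Q}(\beta) - \widehat{U}(\beta)$. Now I would invoke the computed values $\widehat{U}(0)=1$ and $\widehat{U}(\beta)=0$ for $\beta\neq 0$, together with $\widehat{Q}(0)=1$: these force the $\beta=0$ term to vanish, leaving exactly $\sum_{\beta\neq 0}|\widehat{Q}(\beta)|^2$.

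Next I would handle the left inequality $4\|Q-U\|_{TV}^2 \leq 2^d\sum_\alpha (Q(\alpha)-U(\alpha))^2$. Starting from the definition $\|Q-U\|_{TV} = \tfrac{1}{2}\sum_\alpha |Q(\alpha)-U(\alpha)|$, I would write $2\|Q-U\|_{TV} = \sum_\alpha |Q(\alpha)-U(\alpha)| = \sum_\alpha 1\cdot|Q(\alpha)-U(\alpha)|$ and apply Cauchy--Schwarz to the two vectors $(1)_\alpha$ and $(|Q(\alpha)-U(\alpha)|)_\alpha$ over the $2^d$ index values. This gives
\begin{equation*}
    \left(\sum_\alpha |Q(\alpha)-U(\alpha)|\right)^2 \leq \left(\sum_\alpha 1\right)\left(\sum_\alpha (Q(\alpha)-U(\alpha))^2\right) = 2^d\sum_\alpha (Q(\alpha)-U(\alpha))^2.
\end{equation*}
Squaring the relation $2\|Q-U\|_{TV} = \sum_\alpha |Q(\alpha)-U(\alpha)|$ then produces the factor $4$ on the left, and combining with the Cauchy--Schwarz bound finishes this inequality.

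Neither step presents a genuine obstacle; the lemma is entirely formal once the Plancherel theorem and the Fourier transform of the uniform distribution are in hand. The only point requiring a moment's care is confirming that the $\beta=0$ contribution drops out, which relies precisely on the normalization $\widehat{Q}(0)=\widehat{U}(0)=1$ valid for any probability distribution; without it the identity would carry an extra term. I would therefore state that cancellation explicitly rather than leave it implicit.
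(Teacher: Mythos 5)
Your proposal is correct and follows exactly the paper's argument: the equality via the Plancherel theorem applied to $Q-U$ (using $\widehat{Q}(0)=\widehat{U}(0)=1$ and $\widehat{U}(\beta)=0$ for $\beta\neq 0$ to drop the $\beta=0$ term), and the inequality via Cauchy--Schwarz against the all-ones vector. You have simply spelled out the details that the paper's one-line proof leaves implicit.
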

\begin{proof}
The inequality follows by Cauchy--Schwarz and the equality follows from the Plancherel theorem and the fact that $\widehat{U}(0)=\widehat{Q}(0)=1$ and $\widehat{U}(\alpha)=0$ for $\alpha\neq 0$.
\end{proof}

To set up the application of Lemma \ref{lem: upper bound} towards the proof of Theorem \ref{thm: main}, let $e_1, \dotsc, e_d$ be the standard basis for $\left(\mathbf{F}_2\right)^d$. Let
\begin{equation*}
    Q(\alpha)=\begin{cases}
    \frac{1}{2} & \alpha=0
    \\\frac{1}{2d} & \alpha=e_i
    \\0 & \text{else}
    \end{cases}.
\end{equation*}
Then
\begin{equation*}
    \widehat{Q}(\beta)=\sum_{\alpha}Q(\alpha)(-1)^{\alpha\cdot \beta}=\frac{1}{2}+\frac{1}{2d}\sum_{i=1}^d (-1)^{\beta_i}=1-\frac{|\beta|}{d},
\end{equation*}
where $|\beta|$ denotes the number of non-zero entries in $\beta$ (with respect to the standard basis).

Let $A:\left(\mathbf{F}_2\right)^d\to\left(\mathbf{F}_2\right)^d$ be a linear map, and consider the Markov chain starting from $X_0=0$, and
\begin{equation}
\label{eq: lin walk}
    X_n=AX_{n-1}+\varepsilon_n,
\end{equation}
with $\Prob(\varepsilon_n=\alpha)=Q(\alpha)$ for all $\alpha\in\left(\mathbf{F}_2\right)^d$, and the $\varepsilon_n$ independent. Iterating, $X_0=0$. $X_1=\varepsilon_1$, $X_2=A\varepsilon_1+\varepsilon_2$ and so on, and so
\begin{equation}
\label{eq: distr at time n}
    X_n=A^{n-1}\varepsilon_1+A^{n-2}\varepsilon_2+\dotsm+\varepsilon_n.
\end{equation}
As this is a sum of independent random variables, if $Q_n(\alpha)=\Prob(X_n=\alpha)$, then
\begin{equation}
\label{eq: fourier transform time n}
    \widehat{Q}_n(\beta)=\prod_{j=0}^{n-1}\left(1-\frac{|(A^t)^j\beta|}{d}\right).
\end{equation}
In our application, $A$ will be the matrix of squaring (which is linear in characteristic $2$), $A^d=I$, and the product becomes tractable.

In \cite{DG92a, DG92b}, this technique was used on $\left(\mathbf{F}_2\right)^d$ with
\begin{equation*}
    A=\left(\begin{array}{cccc}
        1 &  & & \\
        1 & 1&& \\
         & \ddots&\ddots& \\
         & & 1& 1
    \end{array}\right)
\end{equation*}
($1$'s along the diagonal and lower subdiagonal and $0$ otherwise) to get sharp results. See \cite{D88} for applications to non-Abelian groups.

The following proposition shows that adding deterministic mixing in this situation cannot slow things down. It gives one way of proving the upper bound in Theorem \ref{thm: main}.
\begin{proposition}
\label{prop: perm mixes no slower}
Let $A:\left(\mathbf{F}_2\right)^d\to\left(\mathbf{F}_2\right)^d$ be any invertible linear map, and consider the walk \eqref{eq: lin walk}. Let $P_n$ be the walk $X_n=X_{n-1}+\varepsilon_n$ without applying $A$. Then
\begin{equation*}
    \|Q_n-U\|_2^2\leq \|P_n-U\|_2^2,
\end{equation*}
where the $L^2$ norm is defined by
\begin{equation*}
    \|P-Q\|_2^2=2^d\sum_{\alpha\in\left(\mathbf{F}_2\right)^d}|P(\alpha)-Q(\alpha)|^2.
\end{equation*}
\end{proposition}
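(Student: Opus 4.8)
The plan is to pass to the Fourier side, reduce both quantities to sums of squared Fourier coefficients, and then break the coupling between the different time steps in the product defining $\widehat{Q}_n$ so that the invertibility of $A$ can be exploited as a reindexing bijection. First, by the Plancherel identity (the equality in Lemma \ref{lem: upper bound}, applied to $Q_n$ and then to $P_n$),
\[
    \|Q_n-U\|_2^2=\sum_{\beta\neq 0}|\widehat{Q}_n(\beta)|^2,\qquad \|P_n-U\|_2^2=\sum_{\beta\neq 0}|\widehat{P}_n(\beta)|^2.
\]
Writing $B=A^t$ and $g(\gamma)=1-|\gamma|/d$, formula \eqref{eq: fourier transform time n} gives $\widehat{Q}_n(\beta)=\prod_{j=0}^{n-1}g(B^j\beta)$, while the choice $A=I$ (no deterministic step) yields $\widehat{P}_n(\beta)=g(\beta)^n$. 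The elementary fact I would record here is that $0\le g(\gamma)\le 1$ for every $\gamma$, since $0\le|\gamma|\le d$; in particular every factor is nonnegative.

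The crux is to decouple the product $|\widehat{Q}_n(\beta)|^2=\prod_{j=0}^{n-1}g(B^j\beta)^2$ using the inequality of arithmetic and geometric means. Viewing $g(B^j\beta)^2=\bigl(g(B^j\beta)^{2n}\bigr)^{1/n}$, the product is the geometric mean of the $n$ nonnegative numbers $g(B^j\beta)^{2n}$ for $0\le j\le n-1$, hence at most their arithmetic mean:
\[
    \prod_{j=0}^{n-1}g(B^j\beta)^2\le\frac{1}{n}\sum_{j=0}^{n-1}g(B^j\beta)^{2n}.
\]
The nonnegativity noted above is exactly what legitimizes both the AM–GM application and the identity $g^2=(g^{2n})^{1/n}$.

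Next I would sum over $\beta\neq 0$ and exchange the order of summation. For each fixed $j$, the substitution $\gamma=B^j\beta$ reindexes the inner sum: because $A$, and therefore $B=A^t$ and every power $B^j$, is invertible, the map $\beta\mapsto B^j\beta$ is a bijection of $\left(\mathbf{F}_2\right)^d\setminus\{0\}$ onto itself, so $\sum_{\beta\neq 0}g(B^j\beta)^{2n}=\sum_{\gamma\neq 0}g(\gamma)^{2n}$, a value independent of $j$. The $n$ identical terms cancel the factor $1/n$, leaving
\[
    \sum_{\beta\neq 0}|\widehat{Q}_n(\beta)|^2\le\sum_{\gamma\neq 0}g(\gamma)^{2n}=\sum_{\gamma\neq 0}|\widehat{P}_n(\gamma)|^2,
\]
which together with the Plancherel identities above is precisely the asserted inequality.

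The one place that genuinely requires thought, and the conceptual heart of the argument, is this decoupling step: the factors of $\widehat{Q}_n(\beta)$ at different times $j$ involve the distinct, correlated vectors $B^j\beta$, so one cannot directly reindex the product term by term. AM–GM is exactly the device that trades the coupled product for a decoupled sum of single-time contributions, after which invertibility renders each contribution blind to $j$. I expect no further obstacle; everything else is bookkeeping, and the role of invertibility (as opposed to mere linearity) is confined to the bijectivity of $B^j$ on the nonzero vectors.
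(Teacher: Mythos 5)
Your proof is correct, and it follows the paper's reduction exactly --- Plancherel (the equality in Lemma \ref{lem: upper bound}) plus the product formula \eqref{eq: fourier transform time n} --- but it justifies the crucial inequality $\sum_{\beta\neq 0}\prod_{j=0}^{n-1}g\bigl((A^t)^j\beta\bigr)^2\le\sum_{\beta\neq 0}g(\beta)^{2n}$ by a different device. The paper quotes the rearrangement inequality \cite{R52} as a black box, noting only that the powers of $A^t$ permute the nonzero vectors and that all factors are nonnegative; you instead prove the needed special case from scratch: AM--GM decouples the product into $\frac{1}{n}\sum_{j}g\bigl((A^t)^j\beta\bigr)^{2n}$, and bijectivity of each $(A^t)^j$ on the nonzero vectors makes every one of the $n$ resulting sums equal to $\sum_{\gamma\neq 0}g(\gamma)^{2n}$, cancelling the $1/n$. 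What your route buys is self-containedness and transparency: the general rearrangement inequality carries hypotheses about similarly ordered sequences that one must interpret when citing it, whereas your AM--GM-plus-reindexing argument uses nothing beyond the two facts the paper also needs (nonnegativity of the factors and invertibility of $A$), and it is in fact the standard proof of precisely this special case of rearrangement, where every sequence is a permutation of a single one. What the paper's route buys is brevity: one citation, one line. A minor incidental point in your favor: the display in the paper's proof omits the squares on the Fourier factors (a typo, since $\|Q_n-U\|_2^2=\sum_{\beta\neq 0}|\widehat{Q}_n(\beta)|^2$), while your version carries the squares correctly throughout.
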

\begin{proof}
Note that
\begin{equation*}
    \|Q_n-U\|_2^2=\sum_{\beta\neq 0} \prod_{j=0}^{n-1}\left(1-\frac{|(A^t)^j\beta|}{d}\right)
    \leq \sum_{\beta\neq 0} \prod_{j=0}^{n-1}\left(1-\frac{|\beta|}{d}\right)=\|P_n-U\|_2^2,
\end{equation*}
where the middle inequality is an application of the rearrangement inequality \cite{R52}, noting that an invertible linear map acts as a permutation on the non-zero elements of $\left(\mathbf{F}_2\right)^d$ and all factors are non-negative.
\end{proof}
\begin{remark}
Proposition \ref{prop: perm mixes no slower} says that applying a deterministic bijection between steps of the random walk on the hypercube cannot slow the mixing of the Markov chain (at least in an $L^2$ sense). While this is not very helpful if the resulting chain is supposed to mix faster, squaring fails to speed up the mixing (see Remark \ref{rmk: no speedup}) and so Proposition \ref{prop: perm mixes no slower} gives one way of proving the upper bound in Theorem \ref{thm: main}.
\end{remark}

\section{Proof of Theorem \ref{thm: main}}
\label{sec: proof}
Throughout this section, $p$ is a prime such that $2$ is a primitive root in $\mathbf{F}_{p}$, and $d=p-1$. By Lemma \ref{lem: cyclotomic irr}, the cyclotomic polynomial $\Phi_p(x)=1+x+\dotsm+x^{d}$ is irreducible over $\mathbf{F}_2$. Represent $\mathbf{F}_{2^d}\cong \mathbf{F}_2[x]/(\Phi_p)$. The random walk defined by \eqref{eq: chain defn} with basis \eqref{eq: basis} can be represented as \eqref{eq: lin walk} with the basis $e_i=x^{i-1}$, with $A$ being the matrix of squaring with respect to this basis. We will index the rows and columns of matrices starting from $0$ rather than $1$, to match the exponents in the powers of $x$.

\begin{example}
Consider the case of $p=5$. The matrix $A$ representing the linear map $x\mapsto x^2$ on $\mathbf{F}_{16}$ (viewed as an $\mathbf{F}_2$-vector space) with respect to the standard basis $1$, $x$, $x^2$, $x^3$ is
\begin{equation*}
    A=\begin{blockarray}{ccccc}
    &1&x&x^2&x^3\\
    \begin{block}{c(cccc)}
    1&1&0&1&0\\
    x&0&0&1&1\\
    x^2&0&1&1&0\\
    x^3&0&0&1&0\\
    \end{block}
    \end{blockarray}
    \quad A^2=\begin{blockarray}{ccccc}
    &1&x&x^2&x^3\\
    \begin{block}{c(cccc)}
    1&1&1&0&0\\
    x&0&1&0&0\\
    x^2&0&1&0&1\\
    x^3&0&1&1&0\\
    \end{block}
    \end{blockarray}
    \quad
    A^3=\begin{blockarray}{ccccc}
    &1&x&x^2&x^3\\
    \begin{block}{c(cccc)}
    1&1&0&0&1\\
    x&0&0&1&1\\
    x^2&0&1&0&1\\
    x^3&0&0&0&1\\
    \end{block}
    \end{blockarray}
\end{equation*}
and $A^4=I$. Note that $A^j$ is a permutation matrix with one column replaced by a column of all ones. If this column is $j^*$, then $j^*=(p-1)/2^j$. The following result shows that this holds for all primes $p$ where $2$ is a primitive root in $\mathbf{F}_p$.
\end{example}
\begin{proposition}
Suppose that $\Phi_p(x)=1+x+\dotsm+x^d$ is irreducible over $\mathbf{F}_2$. Then the matrix of squaring $j$ times, $A^j$, $1\leq j\leq d-1$, with respect to the basis $1,x,\dotsc, x^{d-1}$, is a permutation matrix where the column $j^*=(p-1)/2^j$ (starting the indexing from $0$) is replaced by all ones.
\end{proposition}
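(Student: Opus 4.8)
The plan is to work entirely in the exponents of $x$, reducing the linear-algebra assertion to an elementary statement about multiplication by $2^j$ in $\mathbf{Z}/p\mathbf{Z}$. First I would record two consequences of the relation $\Phi_p(x)=0$ in the field $\mathbf{F}_2[x]/(\Phi_p)$. Multiplying $\Phi_p(x)=1+x+\dots+x^d$ by $x+1$ gives $x^p+1=0$, so $x^p=1$; and since $\Phi_p(1)=p=1$ in $\mathbf{F}_2$ (as $p$ is odd) we have $x\neq 1$, so $x$ has multiplicative order exactly $p$, and all exponent arithmetic takes place modulo $p$. The second consequence is that, in characteristic $2$, $\Phi_p(x)=0$ rewrites as $x^d=1+x+\dots+x^{d-1}=\sum_{k=0}^{d-1}x^k$; that is, the one power $x^d=x^{p-1}$ lying outside our basis is precisely the all-ones vector in the coordinates $1,x,\dots,x^{d-1}$.

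Next I would describe how $A^j$ acts on the basis. Since $A$ is the matrix of $y\mapsto y^2$, the matrix $A^j$ represents $y\mapsto y^{2^j}$, so on a basis element it acts by $A^j(x^i)=x^{2^j i}=x^{\,2^j i \bmod p}$ for $0\le i\le d-1$. Because $p$ is odd, $2^j$ is a unit modulo $p$, so $i\mapsto 2^j i \bmod p$ is a bijection of $\mathbf{Z}/p\mathbf{Z}=\{0,1,\dots,p-1\}$. Writing $m:=2^j i \bmod p$, the column of $A^j$ indexed by $i$ is then governed by a single dichotomy: if $m\neq d$ then $m\in\{0,\dots,d-1\}$ and the column is the standard basis vector $e_m$ (a single $1$), whereas if $m=d$ then, by the char-$2$ identity above, the column is all ones.

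It then remains to locate the exceptional column and assemble the permutation structure. The unique $i$ with $2^j i\equiv d\equiv -1\pmod p$ is $i\equiv -2^{-j}\equiv (p-1)2^{-j}\pmod p$, which is exactly $j^*=(p-1)/2^j$ read in $\mathbf{Z}/p\mathbf{Z}$; this is the all-ones column. Here the hypothesis enters: since $\Phi_p$ is irreducible over $\mathbf{F}_2$, the order of $2$ modulo $p$ equals $d$, so $2^j\not\equiv 1\pmod p$ for $1\le j\le d-1$, which guarantees $j^*\not\equiv -1$, i.e.\ $j^*\in\{0,\dots,d-1\}$ is a genuine column index (and for $j$ a multiple of $d$ the matrix is the identity, which explains the stated range). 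For every $i\neq j^*$ the column is the single basis vector $e_{2^j i \bmod p}$, and these are distinct by injectivity of multiplication by $2^j$; removing the domain point $j^*$ deletes precisely the residue $d$ from the image, so the $d-1$ single-$1$ columns occupy $d-1$ distinct rows among $\{0,\dots,d-1\}$, missing exactly one. Thus $A^j$ coincides with a permutation matrix in every column except $j^*$, where the lone $1$ has been overwritten by the all-ones column, which is the claim.

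The only genuinely delicate point is the bookkeeping around the single exceptional exponent. One must read $j^*=(p-1)/2^j$ as \emph{modular division}, not ordinary division: it need not be an integer — for $p=5$, $j=3$ it equals $1/2$, yet represents column $3$ — so the formula only makes sense as $(p-1)\cdot(2^j)^{-1}$ in $\mathbf{Z}/p\mathbf{Z}$. One must also check carefully that exactly one basis index maps to $x^{p-1}$ and that its deletion leaves the remaining single-$1$ columns landing in distinct rows, so that the filled-in permutation matrix really exists. Everything else is routine once $x^p=1$ and the identity $x^d=\sum_{k=0}^{d-1}x^k$ are in hand.
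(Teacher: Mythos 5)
Your proof is correct and follows essentially the same route as the paper's: reduce to exponent arithmetic modulo $p$ via $x^p=1$, observe that $A^j$ sends $x^i$ to $x^{2^ji \bmod p}$, use the identity $x^d=1+x+\dotsb+x^{d-1}$ to identify the all-ones column, and invoke invertibility of $2^j$ modulo $p$ (with $2$ a primitive root, equivalent to irreducibility of $\Phi_p$) for the permutation structure. Your write-up is somewhat more careful than the paper's--in particular the explicit observation that $j^*=(p-1)/2^j$ must be read as division in $\mathbf{Z}/p\mathbf{Z}$, and the check that the remaining single-$1$ columns occupy distinct rows--but the underlying argument is the same.
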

\begin{proof}
Note that since $x^p-1=(x-1)(x^{p-1}+\dotsm +1)=0$ in $\mathbf{F}_{2^d}$, $x^i=x^j$ if $i=j\pmod{p}$. The matrix of squaring $j$ times, $A^j$, sends $x^i$ to $x^{2^ji}$ for all $i$.

Since $2$ is a primitive root modulo $p$, as $j$ goes from $1$ to $p-2$, $2^j$ runs over all elements of $\mathbf{F}_p^\times$ except $1$. If $2^ji=p-1\pmod{p}$, then $x^{2^ji}=x^{p-1}+\dotsm+1$ and otherwise, it's equal to some $x^k$ with $1\leq k\leq p-2$. This means each column except $j^*$ has exactly one non-zero entry, where it's $1$. Moreover, since $2^j$ is invertible modulo $p$, all rows can have at most one non-zero entry off the column $j^*$.
\end{proof}

Next, consider \eqref{eq: distr at time n} with $n=dm$ for some positive integer $m$. From \eqref{eq: fourier transform time n},
\begin{equation}
\label{eq: fourier transform dm}
    \widehat{Q}_n(\beta)=\prod _{j=0}^{d-1}\left(1-\frac{|(A^t)^j\beta|}{d}\right)^m.
\end{equation}
The next result determines these values.

\begin{proposition}
\label{prop: fourier transform}
Let $\beta_i$ denote the coefficient of $x^i$ in $\beta$. The Fourier transform of the square and add Markov chain, \eqref{eq: fourier transform dm}, after $n=dm$ steps satisfies $\widehat{Q}_{n}(\beta)=\widehat{Q}_d(\beta)^m$ and
\begin{equation*}
    \widehat{Q}_d(\beta)=\begin{cases}\left(1-\frac{|\beta|}{d}\right)^{d-|\beta|}\left(1-\frac{|\beta|-1}{d}\right)^{|\beta|}&|\beta|\text{ is even, }\beta_0=0
    \\\left(1-\frac{|\beta|}{d}\right)^{|\beta|+1}\left(1-\frac{|\beta|+1}{d}\right)^{d-|\beta|-1}&|\beta|\text{ is odd, }\beta_0=0
    \\\left(1-\frac{|\beta|}{d}\right)^{d-|\beta|+1}\left(1-\frac{|\beta|-1}{d}\right)^{|\beta|-1}&|\beta|\text{ is even, }\beta_0=1
    \\\left(1-\frac{|\beta|}{d}\right)^{|\beta|}\left(1-\frac{|\beta|+1}{d}\right)^{d-|\beta|}&|\beta|\text{ is odd, }\beta_0=1.
    \end{cases}
\end{equation*}
\end{proposition}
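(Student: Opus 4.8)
The plan is to reduce the entire statement to a single computation, namely the Hamming weight $|(A^t)^j\beta|$ for each $j$. The factorization $\widehat{Q}_n(\beta)=\widehat{Q}_d(\beta)^m$ is read directly off \eqref{eq: fourier transform dm}, which has already grouped the $dm$ Frobenius powers into $d$ blocks using $A^d=I$; so the whole content is the evaluation of $\widehat{Q}_d(\beta)=\prod_{j=0}^{d-1}\bigl(1-|(A^t)^j\beta|/d\bigr)$. I would organize the proof around determining, for the given $\beta$, the multiset of weights $|(A^t)^j\beta|$ as $j$ runs over $0,\dots,d-1$.

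First I would dispose of $j=0$ separately, where $A^0=I$ gives $|(A^t)^0\beta|=|\beta|$. For $1\le j\le d-1$ I would use the preceding proposition: $A^j$ is a permutation matrix except that its column $j^{*}$ is replaced by all ones, where $x^{j^{*}}$ is sent to $x^{p-1}$ by squaring $j$ times. Transposing, $((A^t)^j\beta)_k$ is the inner product of $\beta$ with column $k$ of $A^j$; for $k\ne j^{*}$ this column is a single standard basis vector $e_{\tau(k)}$ with $\tau(k)=2^jk\bmod p$, contributing $\beta_{\tau(k)}$, while for $k=j^{*}$ the column is all ones, contributing the parity $|\beta|\bmod 2$. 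The key combinatorial observation is that as $k$ ranges over all indices other than $j^{*}$, the image $\tau(k)$ ranges bijectively over all basis indices except the single index $a(j):=-2^{j}\bmod p$; here the relation $x^{p-1}=1+x+\dots+x^{d-1}$ and the primitivity of $2$ enter. This yields
\begin{equation*}
   |(A^t)^j\beta| \;=\; \bigl(|\beta|-\beta_{a(j)}\bigr) + \bigl(|\beta|\bmod 2\bigr),\qquad 1\le j\le d-1 .
\end{equation*}

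Next I would track which coordinate $\beta_{a(j)}$ is omitted as $j$ varies. Because $2$ is a primitive root modulo $p$, as $j$ runs over $1,\dots,d-1$ the index $a(j)=-2^{j}\bmod p$ runs bijectively over $\{1,\dots,d-1\}$; in particular $0$ is never hit, so $\beta_0$ is never subtracted, which is exactly why $\beta_0$ governs the four cases. Counting, precisely $|\beta|-\beta_0$ of the values $j\in\{1,\dots,d-1\}$ have $\beta_{a(j)}=1$ and the remaining $(d-1)-(|\beta|-\beta_0)$ have $\beta_{a(j)}=0$. Collecting the $j=0$ factor together with these two groups of factors and substituting $|\beta|\bmod 2$ expresses $\widehat{Q}_d(\beta)$ as a product of powers of $(1-|\beta|/d)$ and $(1-(|\beta|\pm 1)/d)$; splitting on the parity of $|\beta|$ and on $\beta_0\in\{0,1\}$ and simplifying the exponents produces the four displayed formulas.

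The main obstacle is the bookkeeping in the transpose computation: one must be careful that the all-ones block of $A^j$ sits in a \emph{column}, so that after transposing it contributes the parity term in the single output coordinate $j^{*}$, and that the surviving permutation $\tau$ omits exactly $a(j)$ from its image. Verifying that $a(j)\ne 0$ and $a(j)\ne p-1$ for all $1\le j\le d-1$, so that $a(j)$ is a genuine basis index, and that $j\mapsto a(j)$ is a bijection onto $\{1,\dots,d-1\}$, are the points where primitivity of $2$ is essential; getting the exponent counts exactly right in each of the four cases is the only remaining delicate step.
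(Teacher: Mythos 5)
Your proposal is correct and takes essentially the same approach as the paper: it rests on the same structural fact about $A^j$ (permutation matrix with one all-ones column), transposes to compute the weights $|(A^t)^j\beta|$, and counts over $j$ via the bijectivity supplied by the primitivity of $2$, finishing with the same four-case analysis. If anything, your bookkeeping is more precise than the paper's sketch, which loosely attributes the weight change to whether $\beta_{j^*}=1$, whereas you correctly identify the omitted coordinate as $a(j)=-2^j \bmod p$; since both $j\mapsto j^*$ and $j\mapsto a(j)$ are bijections of $\{1,\dots,d-1\}$, the resulting counts, and hence the four formulas, are identical.
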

\begin{proof}
The key point is that the matrix $A^j$ is a permutation matrix except for one column of all ones. The all ones column $j^*$ occurs exactly once in the positions $1,2,\dotsc, d-1$ as $j$ varies in $\{1,2,\dotsc, d-1\}$. The argument then follows by considering the four separate cases.

For example, when $|\beta|$ is even and $\beta_0=0$, there are exactly $|\beta|$ many non-zero entries in the vector $\beta$ among the coefficients of $x,\dotsc, x^{d-1}$. When $j^*$ is among the indices where $\beta$ is non-zero, $(A^t)^j\beta$ has $1$ fewer non-zero entry (since one of the $1$'s was replaced by $\beta\cdot (1,\dotsc, 1)=0$). This occurs exactly $|\beta|$ many times. Otherwise, the number of non-zero entries remains the same. This gives the desired expression.

The other cases are similar.
\end{proof}

\begin{proof}[Proof of Theorem \ref{thm: main}]
From the upper bound lemma (Lemma \ref{lem: upper bound}), for $n=dm$,
\begin{equation}
\label{eq: upper bound}
    2^d\sum_{\alpha\in\mathbf{F}_{2^d}}|Q_n(\alpha)-U(\alpha)|^2=\sum_{\beta\neq 0}\widehat{Q}_d(\beta)^{2m}.
\end{equation}
For the four cases in Proposition \ref{prop: fourier transform}, the sum in \eqref{eq: upper bound} breaks into four sums:
\begin{equation}
\label{eq: four sums}
\begin{split}
    \Sigma_I&=\sum_{j\text{ even}}\left(1-\frac{j}{d}\right)^{2m(d-j)}\left(1-\frac{j-1}{d}\right)^{2mj}{d-1\choose j}
    \\\Sigma_{II}&=\sum_{j\text{ odd}}\left(1-\frac{j}{d}\right)^{2m(j+1)}\left(1-\frac{j+1}{d}\right)^{2m(d-j-1)}{d-1\choose j}
    \\\Sigma_{III}&=\sum_{j\text{ even}}\left(1-\frac{j}{d}\right)^{2m(d-j+1)}\left(1-\frac{j-1}{d}\right)^{2m(j-1)}{d-1\choose j-1}
    \\\Sigma_{IV}&=\sum_{j\text{ odd}}\left(1-\frac{j}{d}\right)^{2mj}\left(1-\frac{j+1}{d}\right)^{2m(d-j)}{d-1\choose j-1}.
\end{split}
\end{equation}

Let us use the expressions in \eqref{eq: four sums} to prove an $L^2$ lower bound. Because of the equality \eqref{eq: upper bound}, the $L^2$ norm is bounded below by any single term. Choose $j=2$ in $\Sigma_I$. This is
\begin{equation}
\label{eq: lower bound term}
    \left(1-\frac{2}{d}\right)^{2m(d-2)}\left(1-\frac{1}{d}\right)^{4m}{d-1\choose 2}=\left(1+\frac{1}{d-2}\right)^{4m}\left(1-\frac{2}{d}\right)^{2md}{d-1\choose 2}.
\end{equation}
Choose $m=\frac{1}{2}(\log(d)-c)$. For $d$ large,
\begin{equation*}
\begin{split}
    \left(1+\frac{1}{d-2}\right)^{4m}&=1+o(1)
    \\\left(1-\frac{2}{d}\right)^{2md}&\sim e^{-4m}=d^{-2}e^{2c}
    \\{d-1\choose 2}&\sim \frac{d^2}{2}.
\end{split}
\end{equation*}
Thus, the right hand side of \eqref{eq: lower bound term} is asymptotic to $e^{2c}/2$. It follows that $Q_n$ is exponentially far from uniform if $n=d(\log(d)-c)/2$. A similar argument shows, for this $n$, the total variation distance to uniform is exponentially close to $1$; this uses the (available) second moment method, see \cite[Proposition 7.14]{LP17}.

Proceed to the upper bound. By the upper bound lemma and Proposition \ref{prop: perm mixes no slower},
\begin{equation*}
    4\|Q_n-U\|_{TV}^2\leq \|P_n-U\|_2^2
\end{equation*}
where $P_n$ is the distribution of the random walk $X_n=X_{n-1}+\varepsilon_n$ on $\left(\mathbf{F}_2\right)^d$ after $n$ steps. It is known (see \cite{D88} for example) that if $n=\frac{1}{2}d(\log(d)+c)$,
\begin{equation*}
    \|P_n-U\|_2^2\leq e^{e^{-c}}-1
\end{equation*}
and $e^{e^{-c}}-1$ goes to zero like $e^{-c}$ when $c$ is large, which gives the desired upper bound.

\end{proof}

\begin{remark}
\label{rmk: no speedup}
Note that the random walk $X_n=X_{n-1}+\varepsilon_n$ on $\left(\mathbf{F}_2\right)^d$ without squaring also takes $\frac{1}{2}d(\log(d)+c)$ steps to equilibriate. Thus, in this case squaring does not introduce a dramatic speedup.
\end{remark}

\begin{remark}
The upper bound can also be proven directly from Proposition \ref{prop: fourier transform}. These more detailed calculations yield essentially the same answers as the rearrangement bounds.
\end{remark}

\begin{remark}
All the arguments given when $p=2$ extend to the case of a general prime $p$, with squaring replaced by taking the $p$th power. An upper bound on the $L^2$ distance needed to apply Proposition \ref{prop: perm mixes no slower} can be found in \cite{DSC96}, which would show that the Markov chain mixes after order $p^2d\log(d)$ steps. 
\end{remark}

\section{Back to squaring and adding on \texorpdfstring{$\mathbf{F}_p$}{F\_p}}
\label{sec: over F_p}
Return to our motivating problem
\begin{equation}
\label{eq: sq and add mod p}
    X_n=X_{n-1}^2+\varepsilon_n \pmod{p},
\end{equation}
where $p$ is a prime and $\varepsilon_n$ is $1$ or $-1$, independently, with probability $1/2$. To showcase the difference, consider the first problem: what is the stationary distribution of this Markov chain? Call this stationary distribution $\pi_p$.

A look at the data shows that for $p\geq 7$, there are many $j$ with $\pi_p(j)=0$ \emph{and} for some $p$, the non-zero $\pi_p(j)$ vary wildly in magnitude while for some $p$, $\pi_p(j)$ is roughly uniform.

The data below is normalized so that $\widetilde{\pi}_p$ is the left eigenvector for the eigenvalue $1$, scaled so all entries are integers.

\begin{example}[p=29]
\begin{equation*}
\begin{split}
    \widetilde{\pi}_{29}=(4, 2, 2, 2, 0, 8, 2, 6, 7, 0, 5, 0, 4, 0, 4, 0, 0, 0, 0, 3, 0, 5, 0, 2, 8, 0, 8, 2, 2).
\end{split}
\end{equation*}
\end{example}

\begin{example}[p=31]
\begin{equation*}
\begin{split}
    \widetilde{\pi}_{31}=(2, 3, 2, 4, 2, 2, 4, 2, 4, 4, 2, 2, 0, 2, 0, 4, 0, 4, 2, 4, 2, 2, 0, 0, 2, 0, 2, 2, 0, 2, 1).
\end{split}
\end{equation*}
Here (and in fact for all $p=3\pmod{4}$, see Theorem \ref{thm: 3 mod 4}), the smallest non-zero entry is $1$, the largest is $4$, and $1$ and $3$ only appear once.
\end{example}

\begin{example}[p=101]

\begin{equation*}
\begin{split}
    \widetilde{\pi}_{101}=(&66056, 33028, 33028, 33028, 0, 33028, 0, 0, 48868, 0, 48868, 0, 7376, 
    \\&48200, 7376, 62952, 21038, 14752, 21038, 0, 32951, 0, 68115, 0, 
    \\&85876, 0, 50712, 0, 0, 16514, 0, 16514, 34236, 0, 34236, 14752, 0, 
    \\&14752, 0, 0, 0, 0, 3688, 0, 34700, 0, 32856, 0, 3688, 0, 1844, 34236, 0, 
    \\&53012, 0, 26152, 0, 7376, 0, 0, 0, 0, 0, 33028, 0, 33028, 0, 27788,  0, 
    \\&62164, 51958, 34376, 51958, 0, 0, 18040, 0, 18040, 0, 68115, 0, 96465, 
    \\&0, 44864, 0, 16514, 7376, 0, 7376, 0, 0, 17188, 0, 17188, 3688, 29504, 
    \\&68396, 29504, 64708, 33028, 33028).
\end{split}
\end{equation*}
Here, the ratio of the largest to smallest non-zero entry is large ($\max/\min\doteq 52$). There appears to be unbounded fluctuation for larger $p$ with $p=1\pmod{4}$.
\end{example}

\begin{example}[p=103]
\begin{equation*}
\begin{split}
    \widetilde{\pi}_{103}=(&2, 3, 2, 4, 0, 2, 2, 2, 4, 2, 2, 0, 2, 2, 4, 4, 4, 4, 4, 2, 2, 0, 2, 0, 4, 2, 2, 4, 
    \\&2, 4, 2, 4, 2, 4, 2, 4, 0, 4, 0, 2, 2, 0, 2, 0, 0, 2, 0, 2, 2, 2, 2, 4, 0, 2, 2, 2, 
    \\&2, 4, 2, 4, 4, 2, 4, 2, 2, 4, 0, 4, 0, 2, 0, 2, 0, 2, 0, 2, 0, 2, 2, 0, 4, 2, 4, 2, 
    \\&2, 0, 0, 0, 0, 0, 2, 2, 4, 2, 2, 0, 2, 2, 2, 4, 0, 2, 1).
\end{split}
\end{equation*}
\end{example}

In \emph{all} cases we looked at, the Markov chain was ergodic (had a unique eigenvector with eigenvalue $1$). We are unable to prove this in general.

There is \emph{some} sense to be made: observe that if $j$ has both $j-1$ and $j+1$ non-squares modulo $p$, then $\pi_p(j)=0$. Classical number theory (see \cite[Chapter 5, Exc. 8]{IR90} for example) shows that asymptotically, this accounts for a quarter of all $j$. This matches the data when $p=3\pmod{4}$. For example, when $p=103$, $\pi_{103}(j)=0$ for 25 values of $j$. However, when $p=1\pmod{4}$, there are further forced zeroes, with $\pi_{101}(j)=0$ for 44 values of $j$.

Ron Graham and Steve Butler observed that:
\begin{itemize}
    \item When $p=3\pmod{4}$, these $j\pm 1$ non-residues \emph{exactly} matches the zeroes (for all $p\leq 10000$).
    \item When $p=1\pmod{4}$, the proportion of zeroes appears to be converging to approximately 42\%.
\end{itemize}
We record one further piece of mathematical progress, which explains the first point.

\begin{theorem}[He, \cite{H20}]
\label{thm: 3 mod 4}
If $p=3\pmod{4}$, then the square-and-add Markov chain \eqref{eq: sq and add mod p} is irreducible, aperiodic, and has a unique stationary distribution given by
\begin{equation*}
    \pi_p(j)=\frac{|\{k\in\mathbf{F}_p\mid k^2\pm 1=j\}|}{2p}.
\end{equation*}
\end{theorem}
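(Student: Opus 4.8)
The plan is to verify that the explicit formula defines a stationary probability measure by a direct quadratic-residue computation, and then to obtain uniqueness from irreducibility and aperiodicity on the recurrent part of the chain. Write $\chi$ for the quadratic character of $\mathbf{F}_p$ and let $s(a)$ denote the number of square roots of $a$, so $s(0)=1$ and $s(a)=1+\chi(a)$ for $a\neq 0$. Since from $i$ the chain moves to $i^2+1$ or $i^2-1$ each with probability $1/2$, the number of ordered pairs $(k,\sigma)\in\mathbf{F}_p\times\{\pm 1\}$ with $k^2+\sigma=j$ is exactly $s(j-1)+s(j+1)$; call this $N(j)$, so that the claimed measure is $\pi_p(j)=N(j)/(2p)$. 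Summing $s$ over all residues gives $\sum_j s(j\pm1)=p$, hence $\sum_j N(j)=2p$ and $\pi_p$ is a genuine probability measure.

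First I would check stationarity. The equation $\pi_p K=\pi_p$ reads
\[
\pi_p(j)=\tfrac12\!\!\sum_{i:\,i^2=j-1}\!\!\pi_p(i)+\tfrac12\!\!\sum_{i:\,i^2=j+1}\!\!\pi_p(i),
\]
so after clearing denominators it suffices to prove the local identity $\sum_{i:\,i^2=a}\big(s(i-1)+s(i+1)\big)=2s(a)$ for every $a$. This is where the hypothesis $p\equiv 3\pmod 4$ enters: it forces $\chi(-1)=-1$, which gives the pairing $s(b)+s(-b)=2$ for all $b$. When $a$ is a nonzero square the sum runs over $i=\pm r$, and grouping the four terms as $\big(s(r+1)+s(-(r+1))\big)+\big(s(r-1)+s(-(r-1))\big)=2+2=4=2s(a)$; the cases $a=0$ and $a$ a non-square ($s(a)=1$ and $s(a)=0$) are immediate. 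Summing this identity at $a=j-1$ and $a=j+1$ yields $2N(j)$, which is exactly what stationarity demands. Note that for $p\equiv 1\pmod 4$ the pairing fails, matching the fact that the clean formula is special to $p\equiv 3\pmod 4$.

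Next I would pin down the support and the recurrence structure, since the stated ``irreducibility'' can only hold on the recurrent class: $\pi_p(j)=0$ exactly when both $j-1$ and $j+1$ are non-squares, so $\pi_p$ cannot be positive everywhere. Let $S=\{j:N(j)>0\}$. The crucial observation is that $S$ is \emph{closed}: for any state $i$ both images lie in $S$, because $(i^2+1)-1=i^2$ and $(i^2-1)+1=i^2$ are squares, so each image has a square among its two neighbours. Thus the chain enters $S$ after a single step and never leaves, every state outside $S$ is transient with zero stationary mass, and $\pi_p$ is supported precisely on $S$---which reconciles the ``irreducible'' claim with the observed zeros.

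The hard part will be proving that the chain restricted to $S$ is irreducible and aperiodic; granting this, Perron--Frobenius gives a unique stationary distribution on $S$, and since $\pi_p$ is stationary and supported on $S$ it must be that distribution, hence the unique stationary distribution of the whole chain. For irreducibility I would show that the single communicating class contains $0$ (note $0\to 1\to 0$ is a closed $2$-walk, as $0^2+1=1$ and $1^2-1=0$) and that every element of $S$ both reaches and is reached from $0$; the natural tool is a Weil-type bound on the character sums counting walks in the ``squaring graph'' $i\mapsto i^2\pm 1$, which forces connectivity for all but finitely many $p$, the remaining small primes being checked directly. Aperiodicity would follow by combining the length-$2$ closed walk above with a short odd closed walk in $S$, whose existence for large $p$ is again guaranteed by the same character-sum count and verified by hand for small $p$. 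I expect this connectivity step, rather than the stationarity computation, to be the real obstacle.
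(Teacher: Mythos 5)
Your stationarity computation is correct and complete, and it is the nicest part of the proposal: the local identity $\sum_{i:\,i^2=a}\bigl(s(i-1)+s(i+1)\bigr)=2s(a)$, deduced from the pairing $s(b)+s(-b)=2$ (which is exactly where $\chi(-1)=-1$, i.e.\ $p\equiv 3\pmod 4$, enters), does verify $\pi_p K=\pi_p$, and your observation that the support $S=\{j: N(j)>0\}$ is closed (both images of any state have the square $i^2$ among their neighbours) is also right, as is the conditional logic that any stationary measure must live on $S$. Note for calibration that the paper under review does not prove this theorem at all --- it quotes it from He's paper \cite{H20} --- so your proposal has to be measured against the full claim, not against an argument reproduced here.

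The genuine gap is that irreducibility and aperiodicity are never proved, and these are not a technicality: they are part of the stated conclusion, your uniqueness argument is explicitly conditional on them (``granting this, Perron--Frobenius gives\dots''), and they constitute the hard content of the cited theorem, as you yourself anticipate. Moreover, the tool you propose --- ``a Weil-type bound on the character sums counting walks'' --- does not match the problem. The number of $t$-step paths from $x$ to $y$ is $\#\{\sigma\in\{\pm 1\}^t : f_{\sigma_t}\circ\cdots\circ f_{\sigma_1}(x)=y\}$, where $f_{\pm}(z)=z^2\pm 1$: this is a sum over exponentially many \emph{compositions} of quadratic maps, indexed by sign sequences, not a character sum over a variable ranging in $\mathbf{F}_p$, so Weil's bounds do not apply directly; and fixing a sign sequence while averaging over $x$ controls typical starting points rather than the pointwise reachability that irreducibility demands. (Indeed, quantitative control of exactly these path counts is what the surrounding paper describes as intractable for general $p$.) A workable argument has to exploit algebraic structure special to $p\equiv 3\pmod 4$ --- for instance, that the group of nonzero quadratic residues then has odd order $(p-1)/2$, so squaring is a bijection on residues and every residue has a canonical residue square root, which makes explicit backward path constructions possible; this is the substance of \cite{H20} that your proposal leaves untouched. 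The same criticism applies to aperiodicity: you have the even closed walk $0\to 1\to 0$, but the required odd closed walk in $S$ is again deferred to a character-sum existence claim (note, e.g., that fixed points exist only when $-3$ or $5$ is a square mod $p$, so even that special case needs an argument).
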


\section*{Acknowledgements}
We thank Amol Aggarwal, Steve Butler, Ron Graham, Bob Guralnick, David Kazhdan, Laurent Miclo, Yuval Peres and Kannan Soundararajan for their help. The first author was partially supported by NSF grant DMS 1954042. The second author was partially supported by NSERC.

\bibliography{bibliography}{}
\bibliographystyle{amsplain}

\appendix
\section{Alternative proof of upper bound in Theorem \ref{thm: main}}
\label{app: alt pf}
In this appendix, we give another proof of the upper bound in Theorem \ref{thm: main} directly from the eigenvalues computed in Proposition \ref{prop: fourier transform}, rather than using Proposition \ref{prop: perm mixes no slower}. We use the same setup and notation as defined previously.

Each of the four sums $\Sigma_I$, $\Sigma_{II}$, $\Sigma_{III}$ and $\Sigma_{IV}$ must be bounded. Throughout, $m=\frac{1}{2}(\log(d)+c)$. The bounds $(1-x)\leq e^{-x}$ and ${n\choose j}\leq n^j/j!$ are used.

\begin{itemize}
\item{$\Sigma_I$:}
A term in the sum is, with $j$ even,
\begin{equation*}
    \left(1-\frac{j}{d}\right)^{2m(d-j)}\left(1-\frac{j-1}{d}\right)^{2mj}{d-1\choose j}\leq \frac{e^{2mj(1-1/d)+j\log(d)}}{j!}.
\end{equation*}
The exponent on the right hand side is equal to $-cj(1-1/d)+2\log(d)/d$ and so
\begin{equation*}
    \Sigma_I\leq \sum_{j=1}^\infty \frac{e^{-cj(1-1/d)+2}}{j!}\leq f_I(c)
\end{equation*}
for some explicit $f_I(c)$ tending to $0$ when $c$ is large.

\item{$\Sigma_{II}$:}
A term in the sum is, with $j$ odd,
\begin{equation*}
\begin{split}
    \left(1-\frac{j}{d}\right)^{2m(j+1)}\left(1-\frac{j+1}{d}\right)^{2m(d-j-1)}{d-1\choose j}&\leq \left(1-\frac{j}{d}\right)^{2md}\frac{d^j}{j!}
    \\&\leq \frac{e^{-2mj+j\log(d)}}{j!}.
    \end{split}
\end{equation*}
The exponent equals $-cj$ and so the sum is bounded y an explicit $f_{II}(c)$ which tends to $0$ when $c$ is large.
\item{$\Sigma_{III}$:}
A term in the sum is, with $j$ even,
\begin{equation*}
\begin{split}
    \left(1-\frac{j}{d}\right)^{2m(d-j+1)}\left(1-\frac{j-1}{d}\right)^{2m(j-1)}{d-1\choose j-1}&\leq \left(1-\frac{j}{d}\right)^{2md}\frac{d^{j-1}}{(j-1)!}
    \\&\leq \frac{e^{-2m(j-1)+(j-1)\log(d)}}{(j-1)!}.
\end{split}
\end{equation*}
Again, the sum is bounded above by $f_{III}(c)$ which tends to $0$ for large $c$.
\item{$\Sigma_{IV}$:}
A term in the sum is, with $j$ odd,
\begin{equation*}
\begin{split}
    \left(1-\frac{j}{d}\right)^{2mj}\left(1-\frac{j+1}{d}\right)^{2m(d-j)}{d-1\choose j-1}&\leq \left(1-\frac{j}{d}\right)^{2md}\frac{d^{j-1}}{(j-1)!}
    \\&\leq \frac{e^{-2mj+(j-1)\log(d)}}{(j-1)!}.
\end{split}
\end{equation*}
This sums as before to $f_{IV}(c)$ tending to $0$ for large $c$.

\end{itemize}

Combining the bounds proves the upper bound claimed for Theorem \ref{thm: main}.

\end{document}